\documentclass[12 pt]{article}%
\usepackage{amsmath, amsfonts, amsthm, color,latexsym}
\usepackage{amsmath, ulem}
\usepackage{amsfonts}
\usepackage{amssymb}
\usepackage{color, soul}
\usepackage[all]{xy}
\usepackage{graphicx}%
\setcounter{MaxMatrixCols}{30}
\providecommand{\U}[1]{\protect\rule{.1in}{.1in}}
\allowdisplaybreaks[4]
\newtheorem{theorem}{Theorem}[section]
\newtheorem{proposition}[theorem]{Proposition}
\newtheorem{corollary}[theorem]{Corollary}

\newtheorem{remark}[theorem]{Remark}

\newtheorem{lemma}[theorem]{Lemma}
\newtheorem{final remark}[theorem]{Final Remark}
\newtheorem{definition}[theorem]{Definition}
\textwidth=17.1cm
\textheight=23cm
\hoffset=-15mm
\voffset=-20mm
\allowdisplaybreaks[4]


\begin{document}

\title{Zero sets of homogeneous polynomials containing infinite dimensional spaces }
\author{Mikaela Aires\thanks{Supported by a CNPq scholarship}~~and Geraldo Botelho\thanks{Supported by FAPEMIG grants RED-00133-21 and APQ-01853-23. \newline 2020 Mathematics Subject Classification: 15A03,  46B87, 46G25, 47H60.\newline Keywords: Infinite dimensional linear spaces, homogeneous polynomials, finitely lineable sets. }}
\date{}
\maketitle

\begin{abstract} Let $X$ be a (real or complex) infinite dimensional linear space. We establish conditions on a homogeneous polynomial $P$ on $X$ so that, if $W$ is any finite dimensional subspace of $X$ on which $P$ vanishes, then $P$ vanishes on  an infinite dimensional subspace of $X$ containing $W$. In the complex case, this is a step beyond the classical result due to Plichko and Zagorodnyuk. Applications to the real case are also provided.
\end{abstract}

\section{Introduction and main result}

In 1998, Plichko and Zagorodnyuk \cite{plichko} proved the following remarkable result: For any infinite dimensional complex linear space $X$, every $\mathbb{C}$-valued homogeneous polynomial on $X$ vanishes on an infinite dimensional subspace of $X$. The real case of the problem, which is clearly very different from the complex case, was thoroughly studied by several authors, see, e.g., \cite{realstory, aronhajek, moslehian}.

 All homogeneous polynomials considered here are scalar-valued. In the modern language of lineability (see \cite{Aron}), the Plichko-Zagorodnyuk theorem asserts that the zero set of any homogeneous polynomial on an infinite dimensional complex space is lineable, meaning that it  contains an infinite dimensional linear space. The subject of lineability is growing quickly, several new interesting approaches to the area have appeared. In this paper, we investigate the problem solved by Plichko-Zagorodnyuk in the complex case under the perspective of the notions introduced in \cite{Favaro} and developed in, e.g., \cite{mikaela, nacib, viniciuspams}. More precisely, we are interested in the following question:

\medskip

{\it Given a homogeneous polynomial $P$ on a (real or complex) infinite dimensional linear space $X$ and given a finite dimensional subspace $W$ of $X$ on which $P$ vanishes, does $P$ vanish on an infinite dimensional subspace of $X$ containing $W$?}
\medskip

Considering, as usual, the $0$-dimensional space $\{0\}$ as a finite dimensional subspace of $X$ and using that $P(0) = 0$, for the answer to the question above to be affirmative, $P$ should vanish on an infinite dimensional subspace of $X$, that is, the zero set of $P$ should be lineable.

In Theorem \ref{main} we establish conditions on $P$ under which  the answer to the question above is affirmative. Applications to the complex case include an extension of the Plichko-Zagorodnyuk theorem (cf. Corollary \ref{ghnv}). Applications of the main result to the real case are also given, see, e.g., Proposition \ref{umd3} and Remark \ref{ytnb}.

The following definition is given just for the sake of simplicity.

\begin{definition}\rm A nonempty subset $A$ of an infinite dimensional linear space $X$ is {\it finitely lineable} if, for every finite dimensional subspace $W$ of $X$ contained in $A \cup \{0\}$, there exists an infinite dimensional subspace of $X$ containing $W$ and contained in $A \cup \{0\}$.
\end{definition}

Note that every finitely lineable set contains, up to $0$, an infinite dimensional space, that is, it is lineable. Since $P(0) = 0$ for every homogeneous polynomial $P$, the answer to the question above is affirmative if and only if the zero set of $P$ is finitely lineable if and only if (in the language of \cite{Favaro}) the zero set of $P$ is $(n,\aleph_0)$-lineable for every $n \in \mathbb{N}\cup \{0\}$.

Let $X$ be an infinite dimensional linear space $\mathbb{K} = \mathbb{C}$  or $\mathbb{R}$. Given an $m$-homogeneous polynomial $P \colon X \longrightarrow \mathbb{K}$, by ${\check P} \colon X^m \longrightarrow \mathbb{K}$ we denote the (unique) symmetric $m$-linear form associated to $P$, that is, $P(x) = {\check P}(\underbrace{x, \ldots ,x}_{m\, {\rm times}})$ for every $x \in X$. Given $0 \leq k \leq m$, $x_1, \ldots, x_k \in X$, and $\alpha_1, \ldots, \alpha_k \in \{0, 1, \ldots, k\}$ with $\alpha_1 + \cdots + \alpha_k = m$, we shall use the simplified notation
$${\check P}(x_1^{\alpha_1}, \ldots, x_k^{\alpha_k}) := {\check P}(\underbrace{x_1, \ldots ,x_1}_{\alpha_1\, {\rm times}}, \ldots,\underbrace{x_k, \ldots ,x_k}_{\alpha_k\, {\rm times}}). $$

 For the basics of homogeneous polynomials on linear spaces the reader is referred to \cite{dineen, mujica}.

 We are assuming the Axiom of Choice, therefore every subspace of a linear space is (algebraically) complemented.

From now on, unless stated explictly otherwise, $m \geq 2$ is a natural number and $X$ is an infinite dimensional linear space over $\mathbb{K} = \mathbb{C}$  or $\mathbb{R}$. Of course, the zero set of a function $f \colon X \longrightarrow \mathbb{K}$ is defined by $\{x \in X : f(x) = 0\}$. Sometimes the zero set of $f$ will be denoted by $f^{-1}(0)$.

\begin{definition} \rm Let $P \colon X \longrightarrow \mathbb{K}$ be an $m$-homogeneous polynomial. For a $t$-homogeneous polynomial $Q \colon X \longrightarrow \mathbb{K}$, $1 \leq t \leq m-1$, we write $Q \prec P$ if there are $x_1,\dots, x_n \in X$ on which $P$ vanishes, and $\alpha_1, \dots, \alpha_n \in \mathbb{N}\cup\{0\}$ with $\alpha_1+ \cdots + \alpha_n+t=m$ such that
$$Q(x)={\check P}(x_1^{\alpha_1}, \dots , x_n^{\alpha_n}, x^t) \mbox{ for every } x \in X.$$
\end{definition}

Our main result reads as follows.

\begin{theorem}\label{main} Let $P \colon X \longrightarrow \mathbb{K}$ be an $m$-homogeneous polynomial. Suppose that, for every infinite dimensional subspace $Y$ of $X$, $P$ vanishes on some nonzero vector of $Y$ and every homogeneous polynomial $Q \prec P$ vanishes on an infinite dimensional subspace of $Y$. Then the zero set of $P$ is finitely lineable.
\end{theorem}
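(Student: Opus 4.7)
The plan is to build the desired infinite dimensional subspace one direction at a time. The key step is the following \emph{extension claim}: whenever $V \subseteq P^{-1}(0)$ is a finite dimensional subspace, there exists $y \in X \setminus V$ such that $P$ vanishes on $V + \mathbb{K}y$. Granted this, starting from $W_0 := W$ and iteratively setting $W_{n+1} := W_n + \mathbb{K}y_{n+1}$ with $y_{n+1}$ produced by the extension claim applied to $V = W_n$, I obtain a strictly increasing chain of finite dimensional subspaces $W_0 \subsetneq W_1 \subsetneq \cdots$, each contained in $P^{-1}(0)$. Their union is an infinite dimensional subspace of $X$ containing $W$ and contained in $P^{-1}(0)$, proving the theorem.

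To prove the extension claim, fix a basis $z_1, \ldots, z_N$ of $V$ and, for $v \in V$ and $\lambda \in \mathbb{K}$, use the binomial expansion
$$P(v + \lambda y) \;=\; \sum_{k=0}^m \binom{m}{k} \lambda^k \check{P}(v^{m-k}, y^k).$$
Since $P(v) = 0$, the $k=0$ summand vanishes. Demanding that $P(v+\lambda y) = 0$ hold for every $\lambda \in \mathbb{K}$ and every $v \in V$ is equivalent, after collecting powers of $\lambda$ and expanding $v = \sum_i c_i z_i$ via the symmetry of $\check P$ (with $\mathbb{K}$ infinite, pointwise vanishing of a polynomial identity forces coefficient-wise vanishing), to the finite system
$$P(y) = 0 \quad\text{and}\quad \check{P}(z_1^{\alpha_1}, \ldots, z_N^{\alpha_N}, y^k) = 0$$
over all $1 \leq k \leq m-1$ and multi-indices $\alpha$ with $\alpha_1 + \cdots + \alpha_N = m-k$. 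Since each $z_i$ lies in $V \subseteq P^{-1}(0)$ and $1 \leq k \leq m-1$, each polynomial $Q_{\alpha, k}(x) := \check{P}(z_1^{\alpha_1}, \ldots, z_N^{\alpha_N}, x^k)$ satisfies $Q_{\alpha, k} \prec P$, and there are only finitely many such polynomials.

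Enumerate them as $Q^{(1)}, \ldots, Q^{(s)}$ and choose an algebraic complement $Y_0$ of $V$ in $X$, which is infinite dimensional. Iterated application of the hypothesis on polynomials $Q \prec P$ yields a decreasing chain of infinite dimensional subspaces $Y_0 \supseteq Y_1 \supseteq \cdots \supseteq Y_s$ with $Q^{(j)}|_{Y_j} \equiv 0$; on $Y_s$ every $Q^{(j)}$ vanishes. The hypothesis on $P$ then supplies a nonzero vector $y \in Y_s$ with $P(y) = 0$. Since $y \in Y_0$ and $Y_0 \cap V = \{0\}$, we have $y \notin V$, and by the equivalence above $P$ vanishes on $V + \mathbb{K}y$, establishing the extension claim.

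The main technical point is the reduction of the single identity $P|_{V + \mathbb{K}y} \equiv 0$ to the finite list $\{P(y) = 0\} \cup \{Q^{(j)}(y) = 0 : 1 \leq j \leq s\}$ of polynomial conditions with $Q^{(j)} \prec P$; once this bookkeeping is in place, the remaining argument is a routine iterated application of the hypothesis. It is worth noting that the weaker form of the assumption on $P$ (vanishing at a single nonzero vector of each infinite dimensional subspace, rather than on an infinite dimensional subspace itself) is exactly sufficient, since the inductive step only adds one new direction at a time, and that the range $1 \leq k \leq m-1$ in the definition of $Q \prec P$ matches precisely the indices left once the $k=0$ and $k=m$ terms of the multinomial expansion are separated out.
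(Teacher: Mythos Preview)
Your proof is correct and follows essentially the same strategy as the paper's: expand $P(v+\lambda y)$ by the binomial formula, reduce the vanishing of $P$ on $V+\mathbb{K}y$ to the vanishing at $y$ of $P$ and of finitely many polynomials $Q\prec P$, use the hypothesis iteratively (the paper's Lemma~\ref{3}) to find a common infinite dimensional null space inside a complement of $V$, and then pick $y$ there. Your packaging of the inductive step as a single uniform ``extension claim''---treating the original basis of $W$ and the newly added vectors on equal footing---is cleaner than the paper's version, which keeps the two families separate and therefore carries extra bookkeeping (two types of auxiliary polynomials and a separate linear-independence argument in Claim~3).
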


The theorem shall be proved in the next section and consequences shall be derived in the last section.

\section{Proof of the main result}

\begin{lemma}\label{3} Let $k \in \mathbb{N}$ and let $f_1, \ldots, f_k \colon X \longrightarrow \mathbb{K}$ be maps such that, regardless of the infinite dimensional subspace $Y$ of $X$, each $f_j$ vanishes on some infinite dimensional subspace of $Y$. Then, all of them vanish on a same infinite dimensional subspace of $X$.
\end{lemma}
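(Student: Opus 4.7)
The plan is to prove the lemma by induction on $k$. The base case $k = 1$ is immediate: the hypothesis applied with $Y = X$ produces an infinite dimensional subspace of $X$ on which $f_1$ vanishes.

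For the inductive step, I assume the statement holds for any family of $k-1$ maps with the stated property, and consider maps $f_1, \ldots, f_k$ satisfying the hypothesis. Since the subfamily $f_1, \ldots, f_{k-1}$ trivially inherits the hypothesis, by induction there is an infinite dimensional subspace $Z \subseteq X$ on which all of $f_1, \ldots, f_{k-1}$ vanish. The key observation is that the hypothesis on $f_k$ is uniform in $Y$: it holds for \emph{every} infinite dimensional subspace of $X$, and in particular for $Y = Z$. Thus $f_k$ vanishes on some infinite dimensional subspace $W$ of $Z$. Because $W \subseteq Z$, all of $f_1, \ldots, f_{k-1}$ still vanish on $W$, and $f_k$ vanishes on $W$ as well, which completes the induction.

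Since the hypothesis is worded uniformly over all infinite dimensional subspaces of $X$, there is essentially no real obstacle: the proof amounts to peeling off one map at a time and exploiting that the uniform hypothesis descends to any infinite dimensional subspace chosen at the previous stage. Accordingly, no delicate dimension-counting or algebraic construction is needed, and the lemma reduces to a one-line inductive argument.
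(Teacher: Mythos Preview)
Your proof is correct and follows essentially the same approach as the paper: both arguments iteratively descend through a chain of infinite dimensional subspaces, using the uniform hypothesis at each step to kill one more $f_j$. The only cosmetic difference is that you phrase it as a formal induction on $k$, whereas the paper writes the finite iteration directly.
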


\begin{proof} By assumption, there is an infinite dimensional subspace $X_1$ of $X$ on which $f_1$ vanishes. By assumption again, there is an infinite dimensional subspace $X_2$ of $X_1$ on which $f_2$ vanishes. Of course, $f_1$ vanishes on $X_2$ too. Repeating the procedure finitely many times we get the result.
\end{proof}

\noindent{\it Proof of Theorem \ref{main}.} Let $W$ be a finite dimensional subspace of $X$ contained in the zero set of $P$. If $W = \{0\}$, then we set $Y = Y_1 = X$ and we choose $0 \neq y_1 \in Y_1$ such  that $P(y_1) = 0$, which exists by assumption. If $W$ is $n$-dimensional for some $n \in \mathbb{N}$, then we fix a basis  $\{x_1, \ldots, x_n\}$ for $W$. Denote by $Y$ an algebraic complement of  $W$, that is, $X = W \oplus Y$. Of course, $Y$ is infinite dimensional. For $ 1 \leq t \leq m-1$ and $1 \leq i_1, \dots , i_{m-t} \leq n,$  consider the $t$-homogeneous polynomial
$$P^{t,i_1, \dots , i_{m-t}} \colon X \longrightarrow \mathbb{K}~,~P^{t,i_1, \dots , i_{m-t}} (x) = {\check P}(x_{i_1}, \dots x_{i_{m-t}}, x^t).$$
Each $P^{t,i_1, \dots , i_{m-t}} \prec P$, so, by assumption, each $P^{t,i_1, \dots , i_{m-t}}$ vanishes on an infinite dimensional subspace of $Y$. Since there are only finitely many of such polynomials and $Y$ is infinite dimensional, by Lemma \ref{3} there is an infinite dimensional subspace $Y_1$ of $Y$ such that, for all $ 1 \leq t \leq m-1$ and $1 \leq i_1, \dots , i_{m-t} \leq n,$
\begin{equation}\label{ps1}P^{t,i_1, \dots , i_{m-t}}(x)= 0 \mbox{ for every } x\in Y_1.
\end{equation}
Since $Y_1$ is an infinite dimensional subspace of $X$, by assumption there is $0 \neq y_1 \in Y_1$ such that $P(y_1) = 0$. In both cases, $n = 0$ and $n > 0$, $Y$ is an infinite dimensional subspace of $X$,  $X = W \oplus Y$, $Y_1$ is an infinite dimensional subspace of $Y$ and $0 \neq y_1 \in Y_1$ is such that $P(y_1) = 0$. 

Just to avoid ambiguities, we stress that, henceforth in this proof, in the case $n = 0$: $\{x_1, \ldots, x_n, y_1\}:= \{y_1\}$, $\{x_1, \ldots, x_n, y_1, \ldots, y_k\}:= \{y_1, \ldots, y_k\} $, ~$a_1x_1 + \cdots + a_nx_n := 0$, $\{x_1, \ldots, x_n\} =\{1, \ldots, n\} := \emptyset$, and so on.
\medskip

\noindent{\bf Claim 1.} $P$ vanishes on ${\rm span}\{x_1, \ldots, x_n,y_1\}$.

\medskip

\noindent{\it Proof of Claim 1.} Given $a_1, \ldots , a_n, \beta \in \mathbb{K}$, set $a=a_1 x_1+\cdots + a_n x_n$. By the binomial formula \cite[Lemma 1.9]{dineen},
$$P(a+\beta y_1)=  \displaystyle \sum_{t=0}^m \binom{m}{t} {\check P} (a^{m-t},(\beta y_1)^t)=\sum_{t=0}^m \binom{m}{t} \beta ^t {\check P} (a^{m-t}, y_1^t).$$
It is enough to see that each of the terms of this sum is zero. Indeed:\\
\noindent $\bullet$ For  $t=0$,  ${\check P}(a^{m-t},y_1 ^t)={\check P}(a^m)=P(a)=0$ because $P$ vanishes on $W$.

\noindent $\bullet$ For $t=m$, ${\check P}(a^{m-t},y_1 ^t)={\check P}(y_1^m)= P(y_1)=0.$

\noindent $\bullet$ For $1 \leq t \leq m-1$,
\begin{align*}
 {\check P}(a^{m-t},y_1 ^t) &= {\check P}(a_1x_1+\cdots  +a_nx_n, \stackrel{(m-t)}{\dots} , a_1x_1+ \cdots + a_nx_n, y_1^t ) \\
 &= {\check P} \left( \sum_{i_1=1}^n a_{i_1} x_{i_1}, {\dots} , \sum_{i_{m-t}=1}^n a_{i_{m-t}}x_{i_{m-t}}, y_1^t\right) \\
  &= \sum_{i_1,\dots, i_{m-t}=1}^n a_{i_1} \cdots a_{i_{m-t}}  {\check P}(x_{i_1}, {\dots},x_{i_{m-t}}, y_1^t) \\
  &= \sum_{i_1,\dots, i_{m-t}=1}^n a_{i_1} \cdots a_{i_{m-t}} P^{t,i_1,\dots , i_{m-t}}(y_1)=0,
\end{align*}
where we used that $\check P$ is $m$-linear, $y_1 \in Y_1$ and (\ref{ps1}). Claim 1 has been proved.

\medskip

\noindent{\bf Claim 2.} There is a sequence $(y_k)_{k=2}^\infty$ of vectors of $X$ and a sequence $(Y_k)_{k=2}^\infty$ of infinite dimensional subspaces of $Y$ such that, for every $k \geq 2$:\\
(i) $0 \neq y_k \in Y_k$, $y_{k-1} \notin Y_{k}\subseteq Y_{k-1}$ and $P(y_k)=0$.\\
(ii) Given $t \in \{1, \ldots, m-1\}$, $j\in \mathbb{N}$, $\alpha_1, \ldots, \alpha_{k-1} \in \mathbb{N}\cup \{0\}$ with $\alpha_1+\cdots + \alpha_{k-1}+j=m-t$, and given $i_1, \ldots, i_j \in \{1, \ldots, n\}$, the $t$-homogeneous polynomials
$$P_{y_1^{\alpha_1},\dots,y_{k-1}^{\alpha_{k-1}}}^{t, i_1,\dots, i_j}\colon X\longrightarrow \mathbb{K}~,~ x \in X \mapsto {\check P}(x_{i_1},\dots , x_{i_j}, y_1^{\alpha_1}, \ldots, y_{k-1}^{\alpha_{k-1}},x^t), $$
$$P_{y_1^{\alpha_1},\dots,y_{k-1}^{\alpha_{k-1}}}^{t} \colon X\longrightarrow \mathbb{K}~,~x \in X \mapsto{\check P}(y_1^{\alpha_1}, \dots, y_{k-1}^{\alpha_{k-1}},x^t), $$
vanish on $Y_k$.\\
(iii) $P$ vanishes on ${\rm span}\{x_1, \ldots, x_n, y_1, \ldots, y_k\}$.

\medskip

\noindent{\it Proof of Claim 2.} Let us show first how $y_2$ and $Y_2$ can be chosen according to the claim. Given $t \in \{1, \ldots, m-1\}$, $j\in \mathbb{N}$, $0 \leq \alpha_1 \leq m-t $ with $\alpha_1+j=m-t$, and $i_1, \ldots, i_j \in \{1, \ldots, n\}$, consider the $t$-homogeneous polynomials
$$P_{y_1^{\alpha_1}}^{t, i_1,\dots, i_{j}}\colon X\longrightarrow \mathbb{K}~,~x \in X\mapsto {\check P}(x_{i_1},\dots , x_{i_j}, y_1^{\alpha_1},x^t)$$
$$P_{y_1^{\alpha_1}}^{t}\colon X\longrightarrow \mathbb{K}~,~ x \in X \mapsto {\check P}(y_1^{\alpha_1},x^t), \mbox{ if } \alpha_1 > 0.$$
Note that if $\alpha_1 = 0$, then $P_{y_1^{\alpha_1}}^{t, i_1,\dots, i_{j}} = P^{t, i_1,\dots, i_{j}}$, which vanishes on $Y_1$ by $(\ref{ps1})$, hence it vanishes on any infinite dimensional subspace of $Y_1$ as well. Note also that we excluded the case $\alpha_1 = 0$ in the second polynomial because, in this case, we would have $t = m$, and we are confined to the cases $1 \leq t \leq m-1$. So, it is enough to handle the cases where $\alpha_1 \geq 1$. Since $P(x_{i_1}) = \cdots = P(x_{i_j}) = P(y_1) = 0$, in all these cases we have $P_{y_1^{\alpha_1},}^{t, i_1,\dots, i_{j}} \prec P$ and $P_{y_1^{\alpha_1},}^{t} \prec P$. By assumption, each of these polynomials vanishes on an infinite dimensional subspace of $Y_1$. Since $Y_1$ is infinite dimensional, by Lemma \ref{3} there is an infinite dimensional subspace $X_1$ of $Y_1$ such that, for all $t, \alpha_1, j$ and $i_1, \ldots, i_j$ as above,
\begin{equation}\label{ps2}
P_{y_1^{\alpha_1}}^{t, i_1,\dots, i_j}(x)= P_{y_1^{\alpha_1}}^{t}(x) =0 \mbox{ for every } x \in X_1.
\end{equation}
Let us consider two cases:\\
\noindent $\bullet$ $y_1 \in X_1$. In this case we choose $Y_2$ as any algebraic complement of ${\rm span}\{y_1\}$ in $X_1$. Of course, $Y_2$ is infinite dimensional subspace.

\noindent $\bullet$ $y_1 \notin X_1$. In this case we choose $Y_2 = X_1.$

In both cases, $Y_2$ is an infinite dimensional subspace of $X_1  \subseteq Y_1 \subseteq Y$, $y_1 \notin Y_2$ and, by (\ref{ps1}) and (\ref{ps2}), for all $t, \alpha_1, j$ and $i_1, \ldots, i_j$ as above,
\begin{equation}\label{p1}
P_{y_1^{\alpha_1}}^{t,i_1,\dots, i_{j}}(x)=P_{y_1^{\alpha_1}}^{t}(x)= P^{t,i_1,\dots, i_{j}}(x)=0 \mbox{ for every } x \in Y_2.
\end{equation}
Since $Y_2$ is an infinite dimensional subspace of $X$, by assumption there is $0 \neq y_2 \in Y_2$ such that $P(y_2) = 0$. To complete this step of the proof of Claim 2 we just have to show that $P$ vanishes on ${\rm span}\{x_1, \ldots, x_n,y_1, y_2\}$. To do so, let $a_1, \dots , a_n, \beta_1, \beta $ be given scalars and set  $b:=a_1 x_1+\cdots + a_n x_n+\beta_1 y_1$. The binomial formula gives
$$P(b+\beta y_2)=  \displaystyle \sum_{t=0}^m \binom{m}{t} {\check P}( b^{m-t},(\beta y_2)^t)=\sum_{t=0}^m \binom{m}{t} \beta ^t {\check P} (b^{m-t}, y_2^t).$$
It is enough to check that each term of this sum is zero. Indeed: \\
$\bullet$ For  $t=0$, ${\check P}(b^{m-t},y_1 ^t)={\check P}(b^m)=P(b)=0$ by Claim 1.

\noindent $\bullet$ For $t=m$, ${\check P}(b^{m-t},y_2 ^t)={\check P}(y_2^m)= P(y_2)=0.$

\noindent $\bullet$ For $1 \leq t \leq m-1$, consider the $(m-t)$-homogeneous polynomial
$$Q_1 \colon Y_2 \longrightarrow \mathbb{K}~,~Q_1(x)={\check P}(y_2^t,x^{m-t}). $$
Calling $a=a_1 x_1+\cdots + a_n x_n$, by the binomial formula,
\begin{align*}
{\check P}(b^{m-t},y_2^t) &= {\check P}((a+\beta_1y_1)^{m-t},y_2^t)=Q_1(a+\beta_1y_1)= \sum_{s=0}^{m-t} \binom{m-t}{s}  \stackrel{\vee}{Q_1} (a^{(m-t)-s},(\beta_1 y_1)^s) \\
&=\displaystyle \sum_{s=0}^{m-t} \binom{m-t}{s} {\check P} (a^{(m-t)-s},(\beta_1 y_1)^s,y_2^t) \\
 &=\sum_{s=0}^{m-t} \binom{m-t}{s}\beta_1^s {\check P} \left( \sum_{i_1=1}^n a_{i_1} x_{i_1}, \dots , \sum_{i_{m-t-s}=1}^n a_{i_{m-t-s}}x_{i_{m-t-s}}, y_1^s, y_2^t\right) \\
  &= \sum_{s=0}^{m-t} \binom{m-t}{s} \beta_1^s \left( \sum_{i_1,\dots, i_{m-t-s}=1}^n a_{i_1} \cdots a_{i_{m-t-s}} {\check P}(x_{i_1},\dots,x_{i_{m-t-s}}, y_1^s,y_2^t) \right).
\end{align*}
All we have to do is to show that ${\check P}(x_{i_1}, \dots,x_{i_{m-t-s}}, y_1^s,y_2^t)=0$
for all $0 \leq s \leq m-t$ and $i_1, \ldots, i_{m-t-s} \in \{1, \ldots, n\}$ (recall that we are in the case $1\leq t \leq m-1$). Indeed,  using that $y_2 \in Y_2$ and $(\ref{p1})$ in the three cases below, we have:\\
$\bullet$ For $s = 0$,
$${\check P}(x_{i_1}, \dots,x_{i_{m-t-s}}, y_1^s,y_2^t)= {\check P}(x_{i_1},\dots,x_{i_{m-t}},y_2^t)= P^{t,i_1,\dots, i_{m-t}}(y_2) = 0.$$
$\bullet$ For $s = m-t$, we have  $m-t-s = 0$, hence,
$${\check P}(x_{i_1}, \dots,x_{i_{m-t-s}}, y_1^s,y_2^t)= {\check P}(y_1^{s},y_2^t)= P_{y_1^s}^{t}(y_2) = 0.$$
\noindent$\bullet$ For $ 1 \leq s \leq m-t-1$, we have $m-t-s > 0$, so
$${\check P}(x_{i_1}, \dots,x_{i_{m-t-s}}, y_1^s,y_2^t)= P_{y_1^s}^{t,i_1,\dots, i_{m-t-s}}(y_2) = 0.$$

We have just  established that  ${\check P}(b^{m-t},y_2^t)=0$ for every $1 \leq t \leq m-1.$ Therefore, $P(b+\beta y_2)=0$, which shows that $P$ vanishes on ${\rm span}\{x_1, \ldots, x_n, y_1, y_2\}$. This completes the selection of $y_2$ and $Y_2$.

Suppose now that $y_1, \ldots, y_k, Y_1, \ldots, Y_k$ have been chosen according to the claim. We shall show how $y_{k+1}$ and $Y_{k+1}$ can be chosen accordingly. Given $t \in \{1, \ldots, m-1\}$, $j \in \mathbb{N}$, $0 \leq \alpha_1,\dots , \alpha_{k-1} \leq m-t$, $1 \leq \alpha_k \leq m-t$ with $\alpha_1+\cdots + \alpha_k +j=m-t$, and $i_1, \ldots, i_j \in \{1, \ldots, n\}$, consider the $t$-homogeneous polynomials
$$P_{y_1^{\alpha_1}, \dots , y_k^{\alpha_k}}^{t, i_1, \dots , i_j} \colon X \longrightarrow \mathbb{K}~,~x \in X \mapsto {\check P}(x_{i_1}, \dots , x_{i_j}, y_1^{\alpha_1}, \dots ,  y_k^{\alpha_k},x^t), $$
$$P_{y_1^{\alpha_1}, \dots , y_k^{\alpha_k}}^t \colon X \longrightarrow \mathbb{K}~,~x \in X \mapsto {\check P}(y_1^{\alpha_1}, \dots ,y_k^{\alpha_k} ,x^t). $$
Note that we are not considering $\alpha_k =0 $ because, in this case, we have
$P_{y_1^{\alpha_1}, \dots , y_k^{\alpha_k}}^{t, i_1, \dots , i_j} = P_{y_1^{\alpha_1}, \dots , y_{k-1}^{\alpha_{k-1}}}^{t, i_1, \dots , i_j} $ and $P_{y_1^{\alpha_1}, \dots , y_k^{\alpha_k}}^t = P_{y_1^{\alpha_1}, \dots , y_{k-1}^{\alpha_{k-1}}}^t$, polynomials which vanish on $Y_{k}$ by the induction hypothesis, hence they vanish on any infinite dimensional subspace of $Y_{k}$ as well. 
Since $P(x_{i_1}) = \cdots = P(x_{i_j}) = P(y_1) = \cdots = P(y_k) =0$, in all these cases we have $P_{y_1^{\alpha_1}, \dots , y_k^{\alpha_k}}^{t, i_1, \dots , i_j} \prec P$ and $P_{y_1^{\alpha_1}, \dots , y_k^{\alpha_k}}^t \prec P$. Since $Y_k$ is an infinite dimensional subspace of $X$, by assumption each of these polynomials vanishes on an infinite dimensional subspace of $Y_k$. By Lemma \ref{3} there is an infinite dimensional subspace $X_k$ of $Y_k$ such that, for all $t, \alpha_1, \ldots, \alpha_k, j$ and $i_1, \ldots, i_j$ as above,
\begin{equation}\label{lk9u} P_{y_1^{\alpha_1}, \dots,  y_k^{\alpha_k}}^{t,i_1,\dots, i_{j}}(x)=P_{y_1^{\alpha_1}, \dots , y_k^{\alpha_k}}^{t}(x)=0 \mbox{ for every } x \in X_k.
\end{equation}
Let us consider two cases:\\
$\bullet$ $y_k \in X_k$. In this case we choose $Y_{k+1}$ as any algebraic complement of ${\rm span}\{y_k\}$ in $X_k$. Of course, $Y_{k+1}$ is infinite dimensional subspace.

\noindent $\bullet$ $y_k \notin X_k$. In this case we choose $Y_{k+1} = X_k.$

In both cases, $Y_{k+1}$ is an infinite dimensional subspace of $X_k  \subseteq Y_k \subseteq Y_1 \subseteq Y$, $y_k \notin Y_{k+1}$ and, by (\ref{ps1}) and (\ref{lk9u}), for all $t, \alpha_1,\ldots, \alpha_k, j$ and $i_1, \ldots, i_j$ as above,
\begin{equation}\label{phi}
P_{y_1^{\alpha_1}, \dots,  y_k^{\alpha_k}}^{t,i_1,\dots, i_{j}}(x)=P_{y_1^{\alpha_1}, \dots , y_k^{\alpha_k}}^{t}(x)= P^{t,i_1,\dots, i_{j}}(x)=0 \mbox{ for every } x \in Y_{k+1}.
\end{equation}
As mentioned before, the above also holds for $\alpha_k = 0$, by the induction hypothesis, because $Y_{k+1} \subseteq Y_k$.

Since $Y_{k+1}$ is an infinite dimensional subspace of $X$, by assumption there is $0 \neq y_{k+1} \in Y_{k+1}$ such that $P(y_{k+1}) = 0$. To complete this step of the proof of Claim 2 we just have to show that $P$ vanishes on ${\rm span}\{x_1, \ldots, x_n,y_1,\ldots,y_k, y_{k+1}\}$. To do so, let $a_1, \dots , a_n, \beta_1, \ldots, \beta_k, \beta $ be given scalars and set  $c:=a_1 x_1+\cdots + a_n x_n+\beta_1 y_1+ \cdots + \beta_k y_k$. The binomial formula gives
$$P(c+\beta y_{k+1})=  \displaystyle \sum_{t=0}^m \binom{m}{t} {\check P} (c^{m-t},(\beta y_{k+1})^t)=\sum_{t=0}^m \binom{m}{t} \beta ^t {\check P} (c^{m-t}, y_{k+1}^t).$$
It is enough to check that each term of this sum is zero. Indeed: \\
$\bullet$ For  $t=0$, since $c \in {\rm span}\{x_1, \ldots, x_n,y_1,\ldots,y_k\}$ and $P$ vanishes on this subspace by the inductions hypothesis, we have ${\check P}(c^{m-t},y_{k+1} ^t)={\check P}(c^m)=P(c)=0$.\\
$\bullet$ For $t=m$, ${\check P}(c^{m-t},y_{k+1} ^t)={\check P}(y_{k+1}^m)= P(y_{k+1})=0.$\\
$\bullet$ For $1 \leq t \leq m-1$, consider the $(m-t)$-homogeneous polynomial
$$Q_k \colon Y_{k+1} \longrightarrow \mathbb{K} ~,~Q_k(x)={\check P}(y_{k+1}^t,x^{m-t}). $$
Putting $a=a_1 x_1+\cdots + a_n x_n$ and $b=\beta _1 y_1+ \dots +\beta_k y_k$, the binomial formula gives
{\footnotesize
\begin{align*}
{\check P}&(c^{m-t},y_{k+1}^t) = {\check P}((a+b)^{m-t},y_{k+1}^t)=Q_k(a+b)= \sum_{s=0}^{m-t} \binom{m-t}{s} \stackrel{\vee}{Q_k}(a^{(m-t)-s},b^s)\\
&=\displaystyle \sum_{s=0}^{m-t} \binom{m-t}{s} {\check P} (a^{(m-t)-s},b^s,y_{k+1}^t) \\
 &=\sum_{s=0}^{m-t} \binom{m-t}{s} {\check P} \left( \sum_{i_1=1}^n a_{i_1} x_{i_1}, \dots , \sum_{i_{m-t-s}=1}^n a_{i_{m-t-s}}x_{i_{m-t-s}},  \sum_{r_{1}=1}^k \beta_{r_1}y_{r_{1}}, \dots, \sum_{r_{s}=1}^k \beta_{r_{s}}y_{r_{s}}, y_{k+1}^t\right) \\
 &= \sum_{s=0}^{m-t} \binom{m-t}{s}  \left( \sum_{i_1,\dots, i_{m-t-s}=1}^n \sum_{r_1,\dots, r_{s}=1}^k {\check P}(a_{i_1}x_{i_1}, \dots, a_{i_{m-t-s}}x_{i_{m-t-s}}, \beta_{r_1}y_{r_{1}}, \dots, \beta_{r_{s}}y_{r_{s}} ,y_{k+1}^t) \right) \\
  &= \sum_{s=0}^{m-t} \binom{m-t}{s}  \left( \sum_{i_1,\dots, i_{m-t-s}=1}^n \sum_{r_1,\dots, r_{s}=1}^k a_{i_1} \cdots a_{i_{m-t-s}} \beta_{r_1}\cdots \beta_{r_s} {\check P}(x_{i_1}, \dots,x_{i_{m-t-s}}, y_{r_{1}}, \dots, y_{r_{s}} ,y_{k+1}^t) \right).
\end{align*} }
All we have to do is to show that ${\check P}(x_{i_1}, \dots,x_{i_{m-t-s}},y_{r_{1}}, \dots, y_{r_{s}} ,y_{k+1}^t)=0$ for all
 $0 \leq s \leq m-t, 1 \leq i_1, \ldots, i_{m-t-s}\leq n, 1 \leq r_1, \ldots, r_s \leq k$ (remember that we are in the case $1\leq t \leq m-1$). Indeed, using that $y_{k+1} \in Y_{k+1}$ and (\ref{phi}) in the three cases below, we have:\\
 $\bullet$ For $s = 0$, $m-t-s=m-t >0$ because $t\leq m-1$, hence
 $${\check P}(x_{i_1}, \dots , x_{i_{m-t-s}}, y_{r_1}, \dots , y_{r_s}, y_{k+1}^t)={\check P}(x_{i_1}, \dots , x_{i_{m-t}}, y_{k+1}^t)=P^{t,i_1,\dots, i_{m-t}}(y_{k+1})\stackrel{{\rm(\ref{phi})}}{=}0.$$
 $\bullet$ For $s = m-t$, $s \neq 0$ and $m-t-s=0$. Since $1 \leq r_1, \ldots, r_s \leq k$, for every $i \in \{1,\ldots, s\}$ there is $j \in \{1, \ldots, k\}$ such that $y_{r_i} = y_j$. Thus, there are $\alpha_1, \ldots \alpha_k \in \mathbb{N} \cup \{0\}$ such that $\alpha_1 +  \cdots +  \alpha_k = m -t =s$ and
 $${\check P}(y_{r_1}, \dots , y_{r_s},y_{k+1}^t)={\check P}(y_1^{\alpha_1}, \dots ,y_k^{\alpha_k},y_{k+1}^t). $$
 Therefore,
 \begin{align*} {\check P}(x_{i_1}, \dots , x_{i_{m-t-s}}, y_{r_1}, \dots , y_{r_s}, y_{k+1}^t)&={\check P}(y_{r_1}, \dots , y_{r_s}, y_{k+1}^t)\\
 & = {\check P}(y_1^{\alpha_1}, \dots ,y_k^{\alpha_k},y_{k+1}^t)\\
 & = P^t_{y_1^{\alpha_1}, \dots,  y_k^{\alpha_k}}(y_{k+1}) \stackrel{{\rm(\ref{phi})}}{=} 0.
 \end{align*}
$\bullet$ For $1 \leq s \leq m-t-1$, we have $m-t-s >0$, so, as before, there are $\alpha_1, \ldots \alpha_k \in \mathbb{N} \cup \{0\}$ such that $\alpha_1 +  \cdots +  \alpha_k = s$ and
 $${\check P}(x_{i_1}, \dots , x_{i_{m-t-s}}, y_{r_1}, \dots , y_{r_s}, y_{k+1}^t)= {\check P}(x_{i_1}, \dots , x_{i_{m-t-s}}, y_1^{\alpha_1}, \dots , y_k^{\alpha_k}, y_{k+1}^t).$$
 Note that, since $s \geq 1$, $\alpha_1, \ldots, \alpha_k$ are not all zero. As always, let $j$ be such that $\alpha_1 + \cdots + \alpha_k + j = m-t$. In this case, $j = m-t-s$. Therefore,
\begin{align*}{\check P}(x_{i_1}, \dots , x_{i_{m-t-s}}, y_{r_1}, \dots , y_{r_s}, y_{k+1}^t)&= {\check P}(x_{i_1}, \dots , x_{i_{j}}, y_1^{\alpha_1}, \dots , y_k^{\alpha_k}, y_{k+1}^t)= P_{y_1^{\alpha_1},\dots ,y_k^{\alpha_k}}^{t,i_1, \dots, i_{j}}(y_{k+1})\stackrel{{\rm(\ref{phi})}}{=}0,
\end{align*}
which proves that $P$ vanishes on ${\rm span}\{x_1, \ldots, x_n,y_1,\ldots,y_k, y_{k+1}\}$. This completes the proof of Claim 2.

\medskip

\noindent{\bf Claim 3.} The set $\{x_1, \ldots, x_n\}\cup\{y_k : k \in \mathbb{N}\}$ is linearly independent.

\medskip

\noindent{\it Proof of Claim 3.} It is enough to prove that $\{x_1, \dots, x_n, y_1, \dots y_k\}$ is linearly independent for every $k \in \mathbb{N}$. To do so, let $\alpha_1, \dots , \alpha_n, \beta_1, \dots , \beta_k$ be scalars such that $\alpha_1 x_1+ \dots + \alpha_n x_n + \beta_1y_1+ \dots + \beta_k y_k=0$. We know that $y_i \in Y_i \subseteq Y$ for every $i \in \mathbb{N}$, hence
$$W \ni \alpha_1 x_1+ \dots + \alpha_n x_n = -\beta_1y_1 - \dots  -\beta_k y_k \in Y.$$
Since $X=W \oplus Y$, it follows that
$$\alpha_1 x_1+ \dots + \alpha_n x_n =0 =  \beta_1y_1 + \dots  +\beta_k y_k.
$$
We have $\alpha_1 = \cdots = \alpha_n=0$ because $\{x_1, \dots, x_n\}$ is a basis for $W$. Suppose that  $\beta_1 \neq 0$.
By Claim 2 we have $Y_k \subseteq \cdots \subseteq Y_3 \subseteq Y_2$ and
$y_i \in Y_i$ for each $i \in \{1,\dots , k\}$. Therefore,
$$y_1= -\frac{\beta_2 y_2}{\beta_1}-\cdots - \frac{\beta_{k}y_{k}}{\beta_1} \in Y_2.$$
This is a contradiction because $y_1 \notin Y_2$ by Claim 2. This proves that $\beta_1=0$. It follows that $\beta_2y_2 + \dots  +\beta_k y_k = 0$. A repetition of latter reasoning gives $\beta_2 = 0$. After finitely many repetitions of this process we obtain $\beta_1 = \cdots = \beta_k = 0$, proving Claim 3.

\medskip

Claim 3 ensures that ${\rm span}\{x_1, \ldots, x_n, y_1, y_2, \ldots\}$ is an infinite dimensional subspace of $X$, of course it contains $W$, and $P$ vanishes on it by Claim 2. Therefore, the zero set of $P$ is finitely lineable.~~~~~~~~~~~~~~~~~~~~~~~~~~~~~~~~~~~~~~~~~~~~~~~~~~~~~~~~~~~~~~~~~~~~~~~~~~~~~~~~~~~~~~~~~~~~~~~~~~~~~~~~~~~~~~~~~~~~~~~~$\square$

\section{Applications}

We start with consequences of the main result that hold in the complex and real cases. The first one is a refined version of Lemma \ref{3} for homogeneous polynomials.

\begin{corollary} Let $P_1, \ldots, P_k$ be homogeneous polynomials on $X$. If, for every infinite dimensional subspace $Y$ of $X$, each $P_i$ vanishes on a nonzero vector of $Y$ and each homogeneous polynomial $Q \prec P_i, i = 1, \ldots, k,$ vanishes on an infinite dimensional subspace of $Y$, then the set $\bigcap\limits_{i=1}^k P_i^{-1}(0)$ is finitely lineable. 
\end{corollary}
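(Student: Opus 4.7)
The natural strategy is induction on $k$. The base case $k=1$ is exactly Theorem \ref{main}. For the inductive step, assume the statement holds for any $k-1$ polynomials satisfying the hypotheses, and fix a finite dimensional subspace $W$ of $X$ with $W \subseteq \bigcap_{i=1}^k P_i^{-1}(0)$. Applying the induction hypothesis to $P_1, \ldots, P_{k-1}$ and to the same $W$ (which is contained in $\bigcap_{i=1}^{k-1} P_i^{-1}(0)$ as well), there is an infinite dimensional subspace $Z$ of $X$ with $W \subseteq Z \subseteq \bigcap_{i=1}^{k-1} P_i^{-1}(0)$. It then suffices to produce an infinite dimensional subspace of $Z$ containing $W$ on which $P_k$ vanishes, because any such subspace will automatically be contained in $\bigcap_{i=1}^{k} P_i^{-1}(0)$.

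To obtain such a subspace, the plan is to apply Theorem \ref{main} to $P_k$ regarded as a homogeneous polynomial on the infinite dimensional linear space $Z$. The crucial step is verifying that the hypotheses of the theorem transfer from the ambient space $X$ to $Z$. Every infinite dimensional subspace $Y$ of $Z$ is also an infinite dimensional subspace of $X$, so the condition that $P_k$ vanishes on a nonzero vector of $Y$ is inherited directly from the hypothesis on $P_k$. For the condition on the relation $\prec$: any $t$-homogeneous polynomial $Q$ on $Z$ with $Q \prec P_k|_Z$, witnessed by vectors $x_1, \ldots, x_n \in Z$ in the zero set of $P_k$ and exponents $\alpha_1, \ldots, \alpha_n$, extends by the same formula $x \mapsto \check{P}_k(x_1^{\alpha_1}, \ldots, x_n^{\alpha_n}, x^t)$ to a polynomial $\widetilde{Q}$ on $X$ with $\widetilde{Q} \prec P_k$; applying the hypothesis to $\widetilde{Q}$ and to the infinite dimensional subspace $Y \subseteq Z \subseteq X$ yields an infinite dimensional subspace of $Y$ on which $\widetilde{Q}$, and therefore $Q$, vanishes.

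With the hypotheses of Theorem \ref{main} established for $P_k$ on $Z$, the theorem guarantees that the zero set of $P_k$ in $Z$ is finitely lineable. Since $W$ is a finite dimensional subspace of $Z$ contained in $P_k^{-1}(0)$, we obtain an infinite dimensional subspace $Z'$ of $Z$ with $W \subseteq Z' \subseteq P_k^{-1}(0)$. Combined with $Z' \subseteq Z \subseteq \bigcap_{i=1}^{k-1} P_i^{-1}(0)$, this gives $W \subseteq Z' \subseteq \bigcap_{i=1}^{k} P_i^{-1}(0)$, completing the induction.

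The only nontrivial point is the transfer of the $\prec$-condition to the subspace $Z$, which is handled by the extension argument above; the rest is a direct invocation of Theorem \ref{main}.
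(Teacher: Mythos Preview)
Your proof is correct and follows essentially the same approach as the paper's: iteratively apply Theorem \ref{main} on successively smaller subspaces, using that the hypotheses descend to any infinite dimensional subspace. The only differences are cosmetic---you phrase the argument as a formal induction on $k$ and spell out the transfer of the $\prec$-condition to the restricted polynomial (via the extension $\widetilde{Q}$), whereas the paper simply iterates the construction directly and asserts that the restriction ``by assumption, fulfills the conditions'' of Theorem \ref{main}.
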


\begin{proof} Let $W$ be finite dimensional subspace of $X$ contained in $\bigcap\limits_{i=1}^k P_i^{-1}(0)$. As $P_1$ fulfills the assumptions of Theorem \ref{main} and $W \subseteq P_1^{-1}(0)$, there is an infinite dimensional $X_1$ of $X$ such that $W \subseteq X_1 \subseteq P_1^{-1}(0)$. The restriction $P_2|_{X_1}$ of $P_2$ to $X_1$ is a homogeneous polynomial on an infinite dimensional space which, by assumption,  fulfills the conditions Theorem \ref{main}. As $W \subseteq P_2^{-1}(0)$, there is  an infinite dimensional subspace $X_2$ of $X_1$ such that $W \subseteq X_2 \subseteq  P_2^{-1}(0)$. As $X_2 \subseteq X_1$, $P_1$ vanishes on $X_2$, hence $W \subseteq X_2 \subseteq  P_1^{-1}(0) \cap P_2^{-1}(0)$. Just repeat the procedure finitely many times to get the result.
\end{proof}

A particular case of Theorem \ref{main} gives a contribution to the subject of pointwise lineability, introduced in \cite{Pellegrino} and developed in, e.g., \cite{mikaela, calderon, anselmogeivison}:

\begin{corollary} Suppose that a homogeneous polynomial $P$ on $X$ satisfying the assumptions of Theorem \ref{main} vanishes on a point $x \in X$. Then there is an infinite dimensional subspace of $X$ containing $x$ and contained in the zero set of $P$.
\end{corollary}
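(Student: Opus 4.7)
The plan is to apply Theorem \ref{main} to the one-dimensional (or zero-dimensional) subspace determined by $x$. First I would dispose of the trivial case $x = 0$: here the conclusion is just the statement that the zero set of $P$ is lineable, which follows from applying Theorem \ref{main} with $W = \{0\}$, since any infinite dimensional subspace in $P^{-1}(0)$ automatically contains $0 = x$.

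Next, in the case $x \neq 0$, I would use homogeneity to observe that $P(\lambda x) = \lambda^m P(x) = 0$ for every scalar $\lambda$, so the one-dimensional subspace $W := \mathrm{span}\{x\}$ is entirely contained in $P^{-1}(0)$. Since $P$ satisfies the hypotheses of Theorem \ref{main}, the zero set of $P$ is finitely lineable; applied to this particular finite dimensional $W$ this yields an infinite dimensional subspace $Z$ of $X$ with $W \subseteq Z \subseteq P^{-1}(0)$. In particular $x \in W \subseteq Z$, which is exactly the conclusion.

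There is no real obstacle here: the corollary is a one-line consequence of Theorem \ref{main} once one notes that $P(x) = 0$ together with $m$-homogeneity forces the entire line through $x$ to lie in the zero set, so that the line $\mathrm{span}\{x\}$ is an admissible finite dimensional subspace to feed into the finite lineability conclusion.
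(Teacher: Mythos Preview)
Your proof is correct and follows exactly the paper's approach: apply Theorem~\ref{main} to $W = \mathrm{span}\{x\}$. The paper's proof is literally that one line; your extra case distinction for $x=0$ and the explicit verification via homogeneity that $\mathrm{span}\{x\}\subseteq P^{-1}(0)$ are sound but not strictly needed, since $\mathrm{span}\{0\}=\{0\}$ is already covered and the homogeneity argument is implicit.
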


\begin{proof} Just apply Theorem \ref{main} to $W = {\rm span}\{x\}$.
\end{proof}

 Given a cardinal number $\kappa$, we say that a subset $A$ of a topological vector space $X$ is {\it finitely $\kappa$-spaceable} if, for every finite dimensional $W$ of $X$ such that $W \subseteq A \cup \{0\}$, there is a closed $\kappa$-dimensional subspace $V$ of $X$ such that $W \subseteq V \subseteq A \cup \{0\}$. In the language of \cite{Favaro}, the zero set of homogeneous polynomial on $X$ is {finitely $\kappa$-spaceable} if and only if it is $(n, \kappa)$-spaceable for every $n \in \mathbb{N}$.

\begin{corollary}\label{qro9} Let $P$ be a continuous homogeneous polynomial on an infinite dimensional topological vector space $X$ satisfying the assumptions of Theorem \ref{main}. Then, the zero set of $P$ is finitely $\aleph_0$-spaceable. If, in addition, $X$ is complete metrizable, in particular if $X$ is a Banach space, then the zero set of $P$ is finitely $\mathfrak{c}$-spaceable.
\end{corollary}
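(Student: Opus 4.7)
The plan is to combine Theorem \ref{main} with the continuity of $P$ via topological closure. Given a finite dimensional subspace $W$ of $X$ with $W \subseteq P^{-1}(0)$, I would first apply Theorem \ref{main} to produce an algebraic infinite dimensional subspace $V_0$ of $X$ satisfying $W \subseteq V_0 \subseteq P^{-1}(0)$, and then set $V := \overline{V_0}$, the topological closure of $V_0$ in $X$.

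For the first assertion, $V$ is a closed linear subspace of $X$ because the closure of a linear subspace of a topological vector space is a linear subspace. Since $P$ is continuous, $P^{-1}(0)$ is closed, so $V = \overline{V_0} \subseteq \overline{P^{-1}(0)} = P^{-1}(0)$. Moreover, $V$ contains $V_0$ (hence its Hamel dimension is at least $\aleph_0$) and therefore contains the finite dimensional $W \subseteq V_0$. Thus $V$ witnesses finite $\aleph_0$-spaceability.

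For the second assertion, assume in addition that $X$ is complete metrizable. Then $V$, being a closed subspace of $X$, is itself a complete metrizable topological vector space, and it is infinite dimensional. I would invoke the standard Baire-category consequence that every infinite dimensional complete metrizable topological vector space has Hamel dimension at least $\mathfrak{c}$; applied to $V$, this yields $\dim V \geq \mathfrak{c}$, so $V$ witnesses finite $\mathfrak{c}$-spaceability.

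The conceptual work is done entirely by Theorem \ref{main}; the closure step is what converts the algebraic conclusion into a topological one, and it is legitimate only because $P$ is assumed continuous. The one nontrivial input is the classical fact that infinite dimensional F-spaces automatically have Hamel dimension at least $\mathfrak{c}$, which I would cite rather than reprove: a naive Baire argument (writing $V$ as a countable union of proper closed finite dimensional subspaces) only rules out $\dim V = \aleph_0$, and the sharper lower bound $\mathfrak{c}$ is where I expect any technical subtlety to lie.
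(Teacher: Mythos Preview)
Your proposal is correct and matches the paper's own argument essentially line for line: apply Theorem~\ref{main}, pass to the closure using that $P^{-1}(0)$ is closed by continuity, and for the second part invoke the fact (cited in the paper as \cite{popoola}) that an infinite dimensional complete metrizable topological vector space has Hamel dimension at least $\mathfrak{c}$. Your closing remark correctly identifies that the lower bound $\mathfrak{c}$ (rather than merely ruling out $\aleph_0$) is the nontrivial external input, which the paper likewise cites rather than reproves.
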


\begin{proof} The first statement follows from Theorem \ref{main} because the zero set of a continuous homogeneous polynomial is a closed set. As the dimension of any complete metrizable infinite dimensional topological vector space is at least $\mathfrak{c}$ (see \cite{popoola}), the second statement follows from the first.
\end{proof}

In the complex case, the following corollary is an extension of the Plichko-Zagorodnyuk theorem \cite{plichko} mentioned in the Introduction, the one that motivated our research.

\begin{corollary}\label{ghnv} The zero set of any homogeneous polynomial on an infinite dimensional complex linear space is finitely lineable.
\end{corollary}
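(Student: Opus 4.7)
The plan is to deduce the corollary by verifying that the two hypotheses of Theorem \ref{main} hold automatically in the complex setting, thanks to the Plichko-Zagorodnyuk theorem recalled in the Introduction. Since Theorem \ref{main} is stated for $m \geq 2$, I would first dispatch the case $m = 1$ separately: a linear functional on $X$ has a zero set that is either all of $X$ or a hyperplane of codimension $1$; in either case the zero set is itself an infinite dimensional subspace containing every finite dimensional subspace on which the functional vanishes, so finite lineability is immediate.

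For the main case, let $P$ be an $m$-homogeneous polynomial on $X$ with $m \geq 2$, and let $Y$ be an arbitrary infinite dimensional subspace of $X$. The first hypothesis of Theorem \ref{main} demands that $P$ vanish on some nonzero vector of $Y$. I would obtain this by applying the Plichko-Zagorodnyuk theorem to the restriction $P|_Y$, which is a scalar-valued homogeneous polynomial on the infinite dimensional complex linear space $Y$: it vanishes on an infinite dimensional subspace of $Y$, in particular on nonzero vectors of $Y$.

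For the second hypothesis, fix a $t$-homogeneous polynomial $Q \prec P$ with $1 \leq t \leq m-1$. By definition, $Q$ is a genuine scalar-valued $t$-homogeneous polynomial on $X$, hence so is its restriction $Q|_Y$ to the infinite dimensional complex linear space $Y$. Another application of the Plichko-Zagorodnyuk theorem produces an infinite dimensional subspace of $Y$ on which $Q|_Y$ vanishes, which is exactly what the hypothesis requires.

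With both hypotheses verified, Theorem \ref{main} yields that the zero set of $P$ is finitely lineable. There is no real obstacle here; the only mild subtlety is recognizing that the second hypothesis of Theorem \ref{main} is tailored precisely so that a single invocation of Plichko-Zagorodnyuk on $Y$ covers every $Q \prec P$ individually, while the combination for finitely many polynomials is already handled inside the proof of Theorem \ref{main} via Lemma \ref{3}.
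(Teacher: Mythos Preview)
Your argument is correct and follows exactly the paper's route: verify the hypotheses of Theorem \ref{main} by applying the Plichko--Zagorodnyuk theorem to the restrictions $P|_Y$ and $Q|_Y$. Your proof is in fact slightly more careful than the paper's one-line version, since you explicitly dispatch the case $m=1$, which falls outside the standing assumption $m\geq 2$ under which Theorem \ref{main} is stated.
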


\begin{proof} The Plichko-Zagorodnyuk theorem assures that any homogeneous polynomial on an infinite dimensional complex linear space fulfills the assumptions of Theorem \ref{main}, therefore the result follows.
\end{proof}

Again, a particular case gives a contribution to pointwise lineability:

\begin{corollary}\label{l3dn} Suppose that a homogeneous polynomial $P$ on a complex infinite dimensional  linear space  $X$ vanishes on a point $x \in X$. Then there is an infinite dimensional subspace of $X$ containing $x$ and contained in the zero set of $P$.
\end{corollary}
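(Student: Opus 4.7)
The plan is to reduce this immediately to Corollary \ref{ghnv} by taking $W = \mathrm{span}\{x\}$. The only thing to check is that this one-dimensional subspace is genuinely contained in the zero set of $P$, which is trivial: if $P$ is $m$-homogeneous and $P(x) = 0$, then for every scalar $\lambda \in \mathbb{C}$ we have $P(\lambda x) = \lambda^m P(x) = 0$, so $\mathrm{span}\{x\} \subseteq P^{-1}(0)$. (If $x = 0$ the statement is vacuous relative to containing $x$, but still correct, since by the Plichko--Zagorodnyuk theorem, or its reformulation as Corollary \ref{ghnv} applied to $W = \{0\}$, $P$ already vanishes on some infinite dimensional subspace.)

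With $W := \mathrm{span}\{x\}$ a finite dimensional subspace contained in $P^{-1}(0)$, Corollary \ref{ghnv} (which asserts finite lineability of the zero set in the complex case, without any additional hypothesis on $P$) produces an infinite dimensional subspace $V$ of $X$ with $W \subseteq V \subseteq P^{-1}(0)$. Since $x \in W \subseteq V$, this $V$ is the desired subspace.

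The only potential obstacle is notational: one must observe that the definition of finite lineability covers the case $W \neq \{0\}$, which is precisely what is needed here. No further argument is required, and the proof reduces to a one-line invocation of Corollary \ref{ghnv}.
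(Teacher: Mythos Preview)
Your proof is correct and matches the paper's approach: the paper does not spell out a separate proof for this corollary, but it is the complex-case instance of the earlier pointwise-lineability corollary whose proof is exactly ``apply the main result to $W={\rm span}\{x\}$'', and your argument does precisely that via Corollary~\ref{ghnv}. The additional remarks you make (the homogeneity check and the $x=0$ case) are accurate and only make the reasoning more explicit.
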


Combining the Plichko-Zagorodnyuk theorem with Corollary \ref{qro9} we get the following:

\begin{corollary}\label{l4dn} The zero set of any continuous homogeneous polynomial $P$ on an infinite dimensional complex topological vector space $X$ is finitely $\aleph_0$-spaceable. If, in addition, $X$ is complete and metrizable, in particular if $X$ is a Banach space, then the zero set of $P$ is finitely $\mathfrak{c}$-spaceable.
\end{corollary}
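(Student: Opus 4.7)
The plan is to combine the Plichko--Zagorodnyuk theorem with Corollary \ref{qro9}, which already packages the topological upgrade. The argument has two ingredients that must be checked before the final invocation.

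First, I would verify that any homogeneous polynomial on an infinite dimensional complex topological vector space $X$ satisfies the hypotheses of Theorem \ref{main}. This is essentially what was observed in the proof of Corollary \ref{ghnv}, but here one must also observe that continuity plays no role in checking those hypotheses, which are purely algebraic. Indeed, let $Y$ be any infinite dimensional linear subspace of $X$. Applying the Plichko--Zagorodnyuk theorem to the restriction $P|_Y$ yields an infinite dimensional subspace of $Y$ on which $P$ vanishes, so in particular $P$ vanishes on a nonzero vector of $Y$. For any $Q \prec P$, the polynomial $Q$ is itself a homogeneous polynomial on $X$, so Plichko--Zagorodnyuk applied to $Q|_Y$ again produces an infinite dimensional subspace of $Y$ on which $Q$ vanishes. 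Both hypotheses of Theorem \ref{main} are thereby verified.

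Next, I would note that $P$ is assumed continuous, so its zero set is a closed subset of $X$. Now Corollary \ref{qro9} applies verbatim: the zero set of $P$ is finitely $\aleph_0$-spaceable. If in addition $X$ is complete and metrizable (in particular, if $X$ is a Banach space), then the same corollary gives that the zero set is finitely $\mathfrak{c}$-spaceable, since every infinite dimensional closed subspace of such an $X$ has dimension at least $\mathfrak{c}$ by the Baire category argument cited from \cite{popoola}.

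There is essentially no obstacle here; the corollary is a direct concatenation of two prior results, and the only point worth stressing is the (trivial) check that the hypotheses of Theorem \ref{main} are inherited from the purely algebraic content of the Plichko--Zagorodnyuk theorem, without any appeal to continuity in that step.
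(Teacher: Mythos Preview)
Your proposal is correct and follows exactly the route indicated in the paper, which simply states that the corollary is obtained by combining the Plichko--Zagorodnyuk theorem with Corollary~\ref{qro9}. You have merely unpacked that combination in slightly more detail than the paper does.
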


Now we draw our attention to the real case. In \cite[Remark 1]{realstory}, the authors proved that every separable real Banach space supports a positive definite 2-homogeneous polynomial. Since every infinite dimensional Banach space admits a separable closed infinite dimensional subspace -- in strong contrast to the complex case -- the assumptions of Theorem \ref{main} are not fulfilled by all homogeneous polynomials on any infinite dimensional real Banach space. Of course, this shows that general results of the type of Corollaries \ref{ghnv}, \ref{l3dn} and \ref{l3dn}  are not true in the real case. In order to show that our results are useful in the real case, we shall finish the paper giving examples of specific polynomials on real spaces for which our results apply.

An $m$-homogeneous polynomial $P$ on a linear space $X$ is {\it of finite type} if there are $k \in \mathbb{N}$ and linear functionals $\varphi_{1,1}, \ldots, \varphi_{1,m}, \ldots, \varphi_{k,1},\ldots, \varphi_{k,m}$ on $X$ such that
$$P(x) = \sum_{j=1}^k \varphi_{j,1}(x) \cdots \varphi_{j,m}(x) \mbox{ for every } x \in X. $$

The fact that each homogeneous polynomial $P$ of finite type vanishes on an infinite dimensional space follows immediately from Lemma \ref{3}; but the finite lineability of $P^{-1}(0)$ does not. In view of Corollary \ref{ghnv}, the next result is stated only for real scalars.

\begin{proposition}\label{umd3} The zero set of any homogeneous polynomial of finite type on any infinite dimensional real linear space is finitely lineable.
\end{proposition}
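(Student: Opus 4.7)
The plan is to verify that any $m$-homogeneous polynomial of finite type $P = \sum_{j=1}^k \varphi_{j,1} \cdots \varphi_{j,m}$ on $X$ satisfies both hypotheses of Theorem \ref{main}; the conclusion is then immediate.

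The basic tool I would use throughout is an elementary observation: if $\psi_1, \ldots, \psi_N$ are linear functionals on an infinite dimensional linear space $Y$, then $\bigcap_{i=1}^N \ker \psi_i$ has codimension at most $N$ in $Y$, and hence is an infinite dimensional subspace of $Y$. Consequently, any polynomial of finite type $R(x) = \sum_{j=1}^k \psi_{j,1}(x) \cdots \psi_{j,m}(x)$ vanishes on an infinite dimensional subspace of $Y$, namely on the common kernel of the restrictions $\psi_{j,i}|_Y$. Applying this observation to the linear functionals $\varphi_{j,i}$ that define $P$ immediately yields the first hypothesis of Theorem \ref{main}: $P$ vanishes on an infinite dimensional subspace of any infinite dimensional $Y \subseteq X$, and in particular on a nonzero vector of $Y$.

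The substantive step is to check that every $Q \prec P$ is itself of finite type. For this I would use the symmetrization formula
$$\check{P}(z_1, \ldots, z_m) = \frac{1}{m!} \sum_{\sigma \in S_m} \sum_{j=1}^k \varphi_{j,1}(z_{\sigma(1)}) \cdots \varphi_{j,m}(z_{\sigma(m)}),$$
whose symmetry is clear and whose value on the diagonal is $P(z)$, so by uniqueness it equals $\check{P}$. Evaluating at $(x_1^{\alpha_1}, \ldots, x_n^{\alpha_n}, x^t)$, each summand splits as a scalar (depending only on $x_1, \ldots, x_n$ and on the original functionals) times a product of exactly $t$ linear functionals of $x$ drawn from $\{\varphi_{j,1}, \ldots, \varphi_{j,m}\}$. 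Collecting terms exhibits $Q$ as a finite-type $t$-homogeneous polynomial. The basic observation then gives the second hypothesis of Theorem \ref{main}: $Q$ vanishes on an infinite dimensional subspace of $Y$.

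With both hypotheses verified, Theorem \ref{main} yields that the zero set of $P$ is finitely lineable. The only nontrivial point is the bookkeeping that certifies $Q \prec P$ as being of finite type, and this reduces to the direct expansion just described.
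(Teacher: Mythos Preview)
Your proposal is correct and follows essentially the same strategy as the paper: verify both hypotheses of Theorem~\ref{main} by exploiting that finitely many linear functionals have a common kernel of finite codimension. The only difference is bookkeeping---the paper invokes the standard fact (\cite[p.~42]{dineen}) that any finite-type $m$-homogeneous polynomial can be written in the special form $P(x)=\sum_{j=1}^k a_j\varphi_j(x)^m$, which makes $\check P$ and hence $Q$ explicit without the symmetrization formula, whereas you work directly with the general representation.
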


\begin{proof} Let $P$ be an $m$-homogeneous polynomial of finite type on an infinite dimensional linear space $X$. It is well known that $P$ can be written as
$$P(x) = \sum_{j=1}^k a_j \varphi_j(x)^m \mbox{ for every } x \in X, $$
where $k \in \mathbb{N}$, $a_1, \ldots,a_k$ are scalars and $\varphi_1, \ldots, \varphi_k$ are linear functionals on $X$ (see \cite[p.\,42]{dineen}). Let $Y$ be an arbitrary infinite dimensional linear subspace of $X$. As $Y$ is infinite dimensional, for every $j = 1, \ldots, k$, the kernel of the restriction of $\varphi_j$ to $Y$, denoted by $\ker(\varphi_j|_Y)$, is an infinite dimensional subspace of $Y$ on which $\varphi_j$ vanishes. By Lemma \ref{3} there is an infinite dimensional subspace $Z$ of $Y$ contained in $\bigcap\limits_{j=1}^k \ker(\varphi_j|_Y)$. Thus, $P$ vanishes on $Z$, in particular, $P$ vanishes on some nonzero vector of $Y$. For every $(x_1, \ldots, x_m) \in X^m$,
$${\check P}(x_1, \ldots, x_m) = \sum_{j=1}^k a_j\varphi_j(x_1) \cdots \varphi_j(x_m). $$
Let $1 \leq t \leq m-1$ and let $Q \prec P$ be a $t$-homogeneous polynomial. Then there are $x_1,\dots, x_n \in X$ on which $P$ vanishes, and $\alpha_1, \dots, \alpha_n \in \mathbb{N}\cup\{0\}$ with $\alpha_1+ \cdots + \alpha_n+t=m$ such that $Q(x)={\check P}(x_1^{\alpha_1}, \dots , x_n^{\alpha_n}, x^t)$ for every $x \in X$. It is plain that there are $y_1, \ldots, y_{m-t} \in \{x_1, \ldots, x_n\}$ such that, for every $x \in X$,
$$Q(x)={\check P}(x_1^{\alpha_1}, \dots , x_n^{\alpha_n}, x^t) = {\check P}(y_1, \ldots, y_{m-t}, x^t) = \sum_{j=1}^k a_j\varphi_j(y_1) \cdots \varphi_j(y_{m-t}) \varphi_j(x)^{t}.$$
Since $Z \subseteq \bigcap\limits_{j=1}^k \ker(\varphi_j|_Y)$ and $t \geq 1$, we have that $Q$ vanishes on the infinite dimensional subspace $Z$ of $Y$. We have proved that $P$ fulfills the assumptions of Theorem \ref{main}, therefore the zero set of $P$ is finitely lineable.
\end{proof}

\begin{remark}\label{ytnb}\rm Still in the real case:\\
(a) On the one hand, Theorem \ref{main} may be applied to specific homogeneous polynomials that are not of finite type. On the other hand, as to arbitrary polynomials belonging to subspaces larger than the finite type polynomials, one cannot go much further: It is easy to see that the positive definite 2-homogeneous polynomial $(a_j)_{j=1}^\infty \in \ell_\infty \mapsto \sum\limits_{j=1}^\infty \frac{a_j^2}{2^j} \in \mathbb{R}$ belongs to the closure, with respect to the usual norm of spaces of continuous homogeneous polynomials, of the space of finite type polynomials. Actually, this polynomial is nuclear. \\
(b) A result similar to Corollary \ref{l3dn}  for  polynomials of finite type on infinite dimensional real linear spaces, and a result similar to Corollary \ref{l4dn} for continuous  polynomials of finite type on infinite dimensional real topological vector spaces follow Proposition \ref{umd3}.

\end{remark}

%


%
%
%
%

\begin{remark}\rm Let us see that the multilinear counterpart of the polynomial problem handled in this paper is not difficult. To do so, let $X_1, \ldots, X_m$ be (real or complex) linear spaces and let $A \colon X_1 \times \cdots \times X_m \longrightarrow \mathbb{K}$ be an $m$-linear form. It is obvious that, for every $ j = 1, \ldots, m$, $A$ vanishes on a $(\dim X_j)$-dimensional space. Furthermore,  using Lemma \ref{3}, one can check directly (not using induction) that, if $X_j$ is infinite dimensional for some $j \in \{1, \ldots, m\}$, then the zero set of $A$ is finitely lineable.
\end{remark}
\noindent{\bf Acknowledgement.} The authors thank Thiago R. Alves for his helpful suggestions.

\bigskip
\noindent Mikaela Aires~~~~~~~~~~~~~~~~~~~~~~~~~~~~~~~~~~~~~~~~~~~~~~Geraldo Botelho~\\
Instituto de Matem\'atica e Estat\'istica~~~~~~~~~~~~~~~Instituto de  Matem\'atica e Estat\'istica\\
Universidade de S\~ao Paulo~~~~~~~~~~~~~~~~~~~~~~~~~~~~~\hspace*{0,1em}Universidade Federal de Uberl\^andia\\
05.508-090 -- S\~ao Paulo -- Brazil~~~~~~~~~~~~~~~~~~~~~~\,\hspace*{0,1em}38.400-902 -- Uberl\^andia -- Brazil\\
e-mail: mikaela\_aires@ime.usp.br~~~~~~~~~~~~~~~~~~~~~\,e-mail: botelho@ufu.br
\bigskip


%

%
%
%

\end{document}